\documentclass[11pt]{amsart}
\usepackage{graphicx}
\newtheorem{thm}{Theorem}[section]

\newtheorem{lem}[thm]{Lemma}

\theoremstyle{definition}

\theoremstyle{remark}
\newtheorem{rem}[thm]{Remark}
\theoremstyle{definition}

\begin{document}

\title[Solution to the Erd\H{o}s problem]{Solution to the Erd\H{o}s problem on distinct residues of factorials}%
\author{Vyacheslav M. Abramov}%
\address{24 Sagan Drive, Cranbourne North, Victoria 3977, Australia}%

\email{vabramov126@gmail.com}%

\thanks{Former affiliation: School of Mathematics, Monash University, Australia}%
\thanks{Current status: aged pensioner}
\thanks{Address: 24 Sagan Drive, Cranbourne North, Melbourne, Victoria-3977, Australia}
\thanks{Email: vabramov126@gmail.com}
\thanks{ORCID: 0000-0002-9859-100X}
\subjclass{11A41, 11A07, 05A10}%
\keywords{Socialist prime numbers; Erd\H{o}s problem; factorials}

\begin{abstract}
Paul Erd\H{o}s posed the following question: \textit{Is there a prime number $p>5$ such that the residues of $2!$, $3!$,\ldots, $(p-1)!$ modulo $p$ all are distinct.} In this study, we give the negative answer on this question in a relatively simple way.
\end{abstract}

\maketitle

\section{Introduction}
Paul Erd\H{o}s posed the following question: \textit{Is there a prime number $p>5$ such that the residues of $2!$, $3!$,\ldots, $(p-1)!$ modulo $p$ all are distinct.} Trudgian \cite{T} called prime numbers satisfying this property \textit{socialist primes}. For the primes having the form $p=3 \ (\mathrm{mod}\ 4)$, the answer on this question is negative, due to Wilson's theorem and elementary modular arithmetic. Recall that
 Wilson's theorem states that a natural number $n > 1$ is a prime number if and only if $(n-1)!\equiv-1(\mathrm{mod}\ n)$, see \cite{W1}. According to an elementary modular arithmetic, along with $(p-1)!\equiv-1(\mathrm{mod}\ p)$ we also have $(p-2)!\equiv1(\mathrm{mod}\ p)$, and in the case of $p=3 \ (\mathrm{mod}\ 4)$, $\left[\left(\frac{p-1}{2}\right)!\right]^2\equiv1(\mathrm{mod}\ p)$ (see relation \eqref{1} below or \cite[Theorem 114]{HW}). Thus the question of Paul Erd\H{o}s generally reduces to the case of primes having the presentation $p=1 \ (\mathrm{mod}\ 4)$.

The problem of the existence of socialist prime numbers remains unsolved and appears in the list of unsolved problems in number theory in \cite[Sect. F11]{G}. Several studies have been conducted this problem. In all these studies the authors found the conditions on prime numbers, under which the answer can be positive. The complexity of those conditions made hard to answer the question whether such primes exist.

The first study was conducted by Rokowska and Schnitzel \cite{RS} in 1960.
They proved that if $p$ exists, then it should satisfy the following conditions
$
\quad p=5 (\mathrm{mod}\ 8),\quad \left(\frac{5}{p}\right)=-1, \quad \left(\frac{-23}{p}\right)=1.
$
Based on the aforementioned conditions, they showed numerically that there are no prime numbers satisfying the aforementioned property in the interval  between $5$ and $1000$.
In addition to that Rokowska and Schnitzel \cite{RS} also proved that if a socialist prime number $p$ exists, then for that $p$ none of the numbers $2!$, $3!$,\ldots, $(p-1)!$ is congruent modulo $p$ to $-\left(\frac{p-1}{2}\right)!$.

Extending the study of \cite{RS}, Trudgian \cite{T} assumed additionally that either $\left(\frac{1957}{p}\right)=1$, or $\left(\frac{1957}{p}\right)=-1$ with $\left(\frac{4y+25}{p}\right)=-1$ for all $y$ satisfying the equation $y(y+4)(y+6)-1\equiv0 \ (\mathrm{mod}\ p).$
By numerical calculations, he confirmed that socialist prime numbers less than $10^9$ do not exist.

Andreji\'{c} and Tatarevi\'{c}  \cite{AT2, AT1} also studied the problem of socialist prime numbers, where they established a connection with left factorial function as well as provided intensive numerical studies of residues confirming that socialist primes less than $10^{11}$ do not exist.

All of the aforementioned studies \cite{AT2, RS, T} were based on the direct analysis of residues, and the study that started in \cite{RS} was then developed in \cite{T} and \cite{AT2} following the same basic idea to find the conditions on prime numbers, under which the required prime number may exist.

The proof presented in this study is based on another idea. Unlike the previous studies, our study is not based on analysis of residues for the undecided cases. We prove that the known identities that follow from Wilson's theorem and modular arithmetic (see identities \eqref{1} below) contradict to the claim that for any $p>5$ the residues modulo $p$ of $\{2!, 3!,\ldots, (p-1)!\}$ are distinct. First we formulate and prove a closely related problem that then adapted to the problem formulated by Erd\H{o}s.
Our proof is fully based on combinatorial arguments.
\smallskip

Our main result is the following theorem.

\begin{thm}\label{thm1}
There are no socialist prime numbers.
\end{thm}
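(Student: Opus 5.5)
The plan is to argue by contradiction from the identities already recorded in the introduction: Wilson's theorem, $(p-2)!\equiv 1$, and $\left[\left(\frac{p-1}{2}\right)!\right]^2\equiv \pm1 \pmod p$. By the reduction in the introduction, I may assume $p\equiv 1 \pmod 4$, and then $\left[\left(\frac{p-1}{2}\right)!\right]^2\equiv -1$. Suppose $p>5$ is socialist. The $p-2$ numbers $2!,\dots,(p-1)!$ are nonzero and pairwise distinct modulo $p$, so they fill all nonzero residues except exactly one, say $r$.

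\emph{Step 1 (identify $r$).} Multiply all the values together in two ways. On one side,
\[
\prod_{k=2}^{p-1} k! \equiv \frac{(p-1)!}{r} \equiv -r^{-1} \pmod p .
\]
On the other side,
\[
\prod_{k=1}^{p-1} k! = \prod_{k=1}^{p-1} k^{\,p-k}.
\]
Pair $k$ with $p-k$, using $k^{p-k}(p-k)^{k}\equiv(-1)^k k^{p}\equiv(-1)^k k$, and treat the middle index $k=\frac{p-1}{2}$ separately. This expresses the product through $\left(\frac{p-1}{2}\right)!$ and a sign. I expect it to recover the Rokowska--Schinzel fact that $r\equiv -\left(\frac{p-1}{2}\right)!$, and hence $r^2\equiv -1$.

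\emph{Step 2 (exploit the symmetry $k!\,(p-1-k)!\equiv(-1)^{k+1}$).} This identity comes from Wilson's theorem and gives an involution $k\mapsto p-1-k$ on indices. Under it the value sets $\{k!\}$ correspond to $\{\pm (k!)^{-1}\}$, with the sign governed by the parity of $k$. Since the value set is all nonzero residues except $r$, I would split the indices $k$ by parity and count how many values land in each coset of $\{\pm1\}$ and how many land in the quadratic residues. The aim is to get two incompatible counts, each forced by $p\equiv 5\pmod 8$ together with $r^2\equiv-1$. Concretely, I would compare the number of $k$ with $k!$ a quadratic residue, which is computable from the Legendre symbol of $k!$ as a partial product, against $\frac{p-1}{2}$ minus the contribution of $r$.

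\emph{Main obstacle.} Step 2 is where the argument must produce a genuine contradiction, and I am not confident that it does. All of the Wilson-type identities above are also satisfied by $p=5$, which is socialist. Any contradiction must therefore use $p>5$ in an essential way, and every published attack has ended with congruence conditions rather than a contradiction. I would first test the counting identity numerically for small $p\equiv 5\pmod 8$. If the counts are consistent, then some additional structural input beyond these identities is needed, and I do not currently see one. In that case the plan yields only necessary conditions of the Rokowska--Schinzel--Trudgian type, not Theorem~\ref{thm1}.
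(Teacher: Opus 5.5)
The gap is Step 2, and Step 2 is where the entire content of the theorem lies. For $p\equiv5\pmod8$ you offer a plan rather than an argument, and the counts you propose provably cannot give a contradiction. Put $m=\frac{p-1}{2}$.

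\textbf{The quadratic-residue count.}
\begin{itemize}
\item Since $r^2\equiv-1$ and $\frac{p-1}{4}$ is odd, $r^{(p-1)/2}=(r^2)^{(p-1)/4}\equiv-1$. So $r$ is a non-residue, and exactly $\frac{p-1}{2}$ of $2!,\dots,(p-1)!$ must be residues.
\item As $\left(\frac{-1}{p}\right)=1$, the identity $k!\,(p-1-k)!\equiv(-1)^{k+1}$ gives $k!$ and $(p-1-k)!$ the same symbol. Also $(p-2)!$ and $(p-1)!$ are residues, and $m!$ is not.
\item So your count reduces to one requirement: exactly half of the indices $k\in\{2,\dots,m-1\}$ satisfy $\prod_{j\le k}\left(\frac{j}{p}\right)=1$.
\end{itemize}
This balance condition is satisfiable, so it is not a contradiction. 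For $p=37$ the partial products equal $+1$ precisely for $k=5,8,9,10,11,12,14,17$, which is $8$ of the $16$ indices.

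\textbf{The $\{\pm1\}$-coset count.} This gives nothing more. For $2\le k\le p-3$ the involution $k\mapsto p-1-k$ preserves parity. It sends $k!$ to $(k!)^{-1}$ if $k$ is odd and to $-(k!)^{-1}$ if $k$ is even. Distinctness then forces each orbit $\{a,-a,a^{-1},-a^{-1}\}$ with $a^4\not\equiv1$ to consist of values $k!$ with $k$ of a single parity. Counting odd indices returns $p\equiv5\pmod8$ and nothing else.

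\textbf{Why no such count can work.} Your observation about $p=5$ extends to every $p\equiv5\pmod8$. The conditions $x_1=1$, $x_{p-1}\equiv-1$, $x_kx_{p-1-k}\equiv(-1)^{k+1}$, with $x_2,\dots,x_{p-1}$ distinct, are met by abstract sequences. To build one, take $x_m^2\equiv-1$. Then distribute the remaining pairs of a consistent system from Lemma~\ref{prop1} over the index pairs $\{k,p-1-k\}$, $2\le k<m$, according to parity. A contradiction must therefore use the recursion $k!=k\cdot(k-1)!$ much more finely than your product or the quadratic character does. The proposal has no such ingredient, as your ``Main obstacle'' paragraph correctly anticipates.

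\textbf{Step 1.} There is no unpaired middle index. Pairing $k$ with $p-k$ for $1\le k\le m$ gives $\prod_{k=1}^{p-1}k!\equiv(-1)^{m(m+1)/2}\,m!$. Using $(m!)^2\equiv-1$, this yields $r\equiv(-1)^{m(m+1)/2}\,m!$.
\begin{itemize}
\item For $p\equiv1\pmod8$ this says $r\equiv m!$, which is itself one of the listed factorials. So Step 1 already disposes of that case; the paper gets it from the orbit-parity count in Lemma~\ref{prop1}.
\item For $p\equiv5\pmod8$ it gives $r\equiv-m!$.
\end{itemize}

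\textbf{The paper's treatment of $p\equiv5\pmod8$.} The paper tries to bring in the recursion by passing from the products $\alpha_i^\prime=i!$ to the ratios $\alpha_i=i$, $\gamma_i\equiv i^{-1}$ (Lemma~\ref{prop2}). You cannot import the missing step from there:
\begin{itemize}
\item Part (i) asserts, with no argument, that a relation $ij\equiv1$ with $2\le i,j\le\frac{p-3}{2}$ forces a repeated value among the $\alpha_i^\prime,\gamma_i^\prime$. Such relations do occur, e.g.\ $5\cdot15\equiv1\pmod{37}$.
\item Part (ii) ends with an unspecified numerical check at $p=29$.
\item The closing claim that no $\delta_i$ can equal $\frac{p\pm1}{2}$ rests on an unexplained appeal to \eqref{23}. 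Yet $\delta_2=\frac{p+1}{2}$ for every $p$, so that claim alone would decide the case.
\end{itemize}
With the correction above, your proposal yields $p\equiv5\pmod8$, $r\equiv-m!$ and the balance condition. These are necessary conditions of Rokowska--Schinzel type, not Theorem~\ref{thm1}.
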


This theorem is proved in Section \ref{S2} that is organized as follow. In Section \ref{S2.1}, we present key identities \eqref{1} and on their basis reformulate the problem. In Section \ref{S2.2}, we solve an auxiliary problem that is closely related to the original one. The key result of this section that solves the aforementioned auxiliary problem is Lemma \ref{prop1}. In the proof of this lemma we introduce the concept of the perfect system of identities that is central in this study as well as provide the classification of the identities. 
The classification of the identities is further applied in Section \ref{S2.4},  where \eqref{1} is represented by \eqref{24}--\eqref{23}. Then the system of identities \eqref{24} is further transformed to the form that enables us to finalize the proof of Theorem \ref{thm1}.

\section{Proof of Theorem \ref{thm1}}\label{S2}
\subsection{Key identities and reformulation of the problem}\label{S2.1}
Recall the known identities based on Wilson's theorem and modular arithmetic:
\begin{equation}\label{1}
\begin{aligned}
(p-1)!&=-(p-2)!=2!(p-3)!=-3!(p-4)!=...\\
&=(-1)^{i-1}(i-1)!(p-i)!=\ldots\\
&=(-1)^{\frac{p-1}{2}}\left(\frac{p-1}{2}\right)!\left(\frac{p-1}{2}\right)!\equiv{-1}\ (\mathrm{mod}\ p).
\end{aligned}
\end{equation}

Our aim is to prove that if the residues modulo $p$ of $\{2!, 3!,\ldots, (p-1)!\}$ are distinct, then \eqref{1} is impossible and vice versa, that is, given \eqref{1} the residues modulo $p$ of $\{2!, 3!,\ldots, (p-1)!\}$ cannot be distinct.

\smallskip

To prove this, we solve the auxiliary problem formulated in the next section.
\subsection{Auxiliary problem}\label{S2.2}

Let $\big\{\alpha_1, \alpha_2,\ldots, \alpha_{\frac{p-1}{2}}\big\}$ and let $\big\{\beta_1, \beta_2,\ldots, \beta_{\frac{p-1}{2}}\big\}$ be the two subsets of $I=\{1,2,\ldots,p-1\}$ satisfying the properties
\begin{eqnarray*}
  \big\{\alpha_1, \alpha_2,\ldots, \alpha_{\frac{p-1}{2}}\big\}\cup \big\{\beta_1, \beta_2,\ldots, \beta_{\frac{p-1}{2}}\big\}&=& I, \\
   \big\{\alpha_1, \alpha_2,\ldots, \alpha_{\frac{p-1}{2}}\big\}\cap \big\{\beta_1, \beta_2,\ldots, \beta_{\frac{p-1}{2}}\big\} &=& \emptyset,
\end{eqnarray*}
and specified as
\begin{equation}\label{2}
\alpha_i\beta_i\equiv(-1)^i\ (\mathrm{mod}\ p), \quad i=1, 2,\ldots, \frac{p-1}{2}.
\end{equation}
Our first aim is to find the values $p$ under which \eqref{2} is consistent, and describe the structure of systems of identities that provides this consistency.

\medskip

Our study is conducted under the assumption $p=1\ (\mathrm{mod}\ 4)$.
The obtained results will be then applied to the residues modulo $p$ of $\{2!$, $3!,\ldots$, $(p-1)!\}$ that according to \eqref{1} behave similarly.

\smallskip
We prove the following result.

\begin{lem}\label{prop1}
Assume that $p=1\ (\mathrm{mod}\ 4)$. Then, under an appropriate choice of $\alpha_i$ and $\beta_i$ in \eqref{2}, the system of identities is consistent if and only if $p=5\ (\mathrm{mod}\ 8)$.
\end{lem}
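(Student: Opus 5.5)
The plan is to reduce \eqref{2} to a perfect matching problem on $I=\{1,\dots,p-1\}$, viewed as $(\mathbb{Z}/p\mathbb{Z})^\times$, and then count parities. The indexing in \eqref{2} only fixes how many pairs of each kind occur. Since $p\equiv1\ (\mathrm{mod}\ 4)$, $\frac{p-1}{2}$ is even, so exactly $\frac{p-1}{4}$ indices $i$ are odd and $\frac{p-1}{4}$ are even. Hence a consistent system \eqref{2} is the same as a partition of $I$ into unordered pairs $\{a,b\}$ with
\begin{itemize}
\item exactly $\frac{p-1}{4}$ pairs satisfying $ab\equiv1$ (``type $+$''), and
\item exactly $\frac{p-1}{4}$ pairs satisfying $ab\equiv-1$ (``type $-$'').
\end{itemize}
Which element of a pair is called $\alpha_i$ and which $\beta_i$, and which index $i$ of the right parity it receives, is the ``appropriate choice'' in the statement.

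Next I would describe the orbit structure. Introduce the involutions $\sigma(x)=x^{-1}$ and $\tau(x)=-x^{-1}$ on $I$. A type $+$ pair is $\{x,\sigma x\}$ with $x\neq\sigma x$, and a type $-$ pair is $\{x,\tau x\}$ with $x\neq\tau x$. Since $\sigma\tau(x)=-x$, the group $\langle\sigma,\tau\rangle$ has orbits $\{x,-x,x^{-1},-x^{-1}\}$, and every admissible pair lies inside one orbit.

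\begin{itemize}
\item $\{1,-1\}$: here $\sigma$ fixes both points, so the only admissible pair is $\{1,-1\}$, which is of type $-$.
\item $\{\iota,-\iota\}$, where $\iota^2\equiv-1$ (such $\iota$ exists because $p\equiv1\ (\mathrm{mod}\ 4)$): here $\tau$ fixes both points, so the only admissible pair is $\{\iota,-\iota\}$, which is of type $+$.
\item All other orbits, i.e. those with $x^2\not\equiv\pm1$, have exactly four elements, and there are $\frac{p-5}{4}$ of them.
\end{itemize}

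In a generic four-element orbit, $x$ cannot be paired with $-x$, since $-x^2\not\equiv\pm1$. So the orbit splits either as $\{x,x^{-1}\},\{-x,-x^{-1}\}$, giving two type $+$ pairs, or as $\{x,-x^{-1}\},\{-x,x^{-1}\}$, giving two type $-$ pairs. Therefore any such partition has $1+2k$ type $-$ pairs, where $k$ is the number of generic orbits split the second way.

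The parity count then finishes both directions. Consistency requires $\frac{p-1}{4}=1+2k$, i.e. $\frac{p-1}{4}$ odd, i.e. $p\equiv5\ (\mathrm{mod}\ 8)$. Conversely, if $p\equiv5\ (\mathrm{mod}\ 8)$, split $k=\frac{p-5}{8}$ generic orbits the type $-$ way and the remaining $\frac{p-5}{8}$ the type $+$ way. This gives $\frac{p-1}{4}$ pairs of each type, which we then label with odd and even indices respectively.

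The main obstacle is the step showing that pairs never cross orbits and that generic orbits contribute an even number of each type; this carries the whole parity argument. Once the two involutions are introduced it is immediate, so care is needed mainly to treat the small cases correctly, namely the special orbits and $p=5$, where there are no generic orbits.
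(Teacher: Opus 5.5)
Your proof is correct and follows essentially the same route as the paper. The pairs $\{1,p-1\}$ (type $-$) and $\{\iota,p-\iota\}$ (the paper's tag identity, type $+$) are forced. Every other pair $\{a,b\}$ comes with its negation-partner $\{-a,-b\}$ of the same type, which gives the parity obstruction for $p\equiv1\ (\mathrm{mod}\ 8)$. For $p\equiv5\ (\mathrm{mod}\ 8)$, one converts $\frac{p-5}{8}$ of these couples in the system of inverse pairs (the paper's \emph{perfect system}) to type $-$.

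Your orbit formulation with $\sigma$ and $\tau$ makes rigorous the step the paper argues only loosely. That step is: a type $+$ pair $\{a,b\}$ with $b\neq -a$ forces $\{-a,-b\}$ to be a type $+$ pair as well.
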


\begin{proof}
The proof is structured into four sections. In the first section, we describe the structure of the system of identities \eqref{2}. In the second section and third sections, we study the cases $p=1\ (\mathrm{mod}\ 8)$ and $p=5\ (\mathrm{mod}\ 8)$, respectively.
\smallskip

\textit{1. Basic structure of the system of identities.} Notice first that if $\alpha_1=1$, then $\beta_1$ must be equal to $p-1$ and the right-hand side must be $-1 \ (\mathrm{mod}\ p)$. Therefore if we split the system of identities \eqref{2} into two separate subsystems:
\begin{equation}\label{3}
\alpha_{2i}\beta_{2i}\equiv1 \ (\mathrm{mod}\ p),\quad i=1, 2,\ldots, \frac{p-1}{4},
\end{equation}
and
\begin{equation}\label{4}
\alpha_{2i-1}\beta_{2i-1}\equiv-1 \ (\mathrm{mod}\ p), \quad i=1, 2,\ldots, \frac{p-1}{4},
\end{equation}
then the aforementioned identity belongs to \eqref{4}. If we exclude this identity from consideration, we will have $\frac{p-1}{2}-1$ remaining identities in total, $\frac{p-1}{4}$ of which satisfy \eqref{3}, and the rest $\frac{p-1}{4}-1$ satisfy \eqref{4}. We will show below that one of $\frac{p-1}{4}$ identities of \eqref{3} has a specific form and can be excluded from consideration as well. We will call it \textit{tag} identity. That is, \eqref{3} consists of a tag identity and $\frac{p-1}{4}-1$ other (regular) identities. A tag identity, marked as
\[
\alpha_{\frac{p-1}{2}}\beta_{\frac{p-1}{2}} \equiv 1\ (\mathrm{mod}\ p),
\]
and having the form
\begin{equation}\label{31}
\alpha_{\frac{p-1}{2}}\left(p-\alpha_{\frac{p-1}{2}}\right) \equiv 1\ (\mathrm{mod}\ p),
\end{equation}
i.e., $\beta_{\frac{p-1}{2}}=p-\alpha_{\frac{p-1}{2}}$,
will be discussed later. It will be shown that there is only a unique identity in the system having the form $i(p-i)\equiv1 \ (\mathrm{mod}\ p)$. That is, a tag identity is \textit{the} tag identity.

Let us now discuss the properties of the remaining systems of regular identities. At this point, we assume that the system of identities \eqref{12} given below contains the $\frac{p-5}{4}$ identities, none of which satisfying the aforementioned property $i(p-i)\equiv1 \ (\mathrm{mod}\ p)$. That is, in any of the identities in \eqref{12}, $\beta_{2i}\neq p-\alpha_{2i}$.

We have:
\begin{equation}\label{12}
\alpha_{2i}\beta_{2i}\equiv1 \ (\mathrm{mod}\ p), \quad i=1, 2,\ldots, \frac{p-5}{4},
\end{equation}
and
\begin{equation}\label{13}
\alpha_{2i+1}\beta_{2i+1}\equiv-1 \ (\mathrm{mod}\ p), \quad i=1, 2,\ldots, \frac{p-5}{4}.
\end{equation}

Notice that if for some $i_0$,
\begin{equation}\label{14}
\alpha_{i_0}\beta_{i_0}\equiv1 \ (\mathrm{mod}\ p),
\end{equation}
then also
\begin{equation}\label{15}
(p-\alpha_{i_0})(p-\beta_{i_0})\equiv1 \ (\mathrm{mod}\ p).
\end{equation}
Similarly, if
\begin{equation}\label{16}
\alpha_{i_1}\beta_{i_1}\equiv-1 \ (\mathrm{mod}\ p),
\end{equation}
then also
\begin{equation}\label{17}
(p-\alpha_{i_1})(p-\beta_{i_1})\equiv-1 \ (\mathrm{mod}\ p).
\end{equation}
According to the assumption, all $\alpha_i$ and $\beta_i$ that appear in \eqref{12} and \eqref{13} must be distinct. Therefore after changing the variables, $\tilde{\alpha}_{i_0}=p-\alpha_{i_0}$, $\tilde{\beta}_{i_0}=p-\beta_{i_0}$, $\tilde{\alpha}_{i_1}=p-\alpha_{i_1}$ and $\tilde{\beta}_{i_1}=p-\beta_{i_1}$ are new distinct values.

Let us now discuss \eqref{31}.
It cannot be a part of the system of identities \eqref{14} and \eqref{15}, since if it satisfies \eqref{14} it is also satisfies \eqref{15}.

\smallskip
Now we prove that it is a unique identity in the system of identities \eqref{3}.

\smallskip
To specificate the tag identity, we consider the equation
\begin{equation}\label{20}
i(p-i)=Kp+1
\end{equation}
for $K$ and $i$. This equation reduces to
\[
K=i-\frac{i^2+1}{p},\quad 2\leq i\leq p-2.
\]
Apparently that $K$ takes integer value only in the case when $i^2\equiv-1\ (\mathrm{mod}\ p)$. Such value $i$ exists and is unique, if we dismiss another value $(p-i)^2\equiv-1\ (\mathrm{mod}\ p)$ satisfying the same properties due to the symmetry. That is, there is exactly one identity satisfying \eqref{20}, and the tag identity is strictly unique in the system of identities \eqref{2} or \eqref{3}.

For the further studies of \eqref{12} and \eqref{13}, we consider the two cases given below.

\smallskip
\textit{2. Case of $p=1\ (\mathrm{mod}\ 8)$.}
In this case, $\frac{p-1}{4}$ is even. Then $\frac{p-1}{4}-1$ is odd, and each of the systems of identities \eqref{12}, \eqref{13} contains the odd number of identities. This means that there exists $i_0$ such that the system of identities \eqref{12} contains \eqref{14} and does not contain \eqref{15}.

Then instead of \eqref{15} we must have
\begin{equation}\label{18}
\alpha_{i_0}(p-\beta_{i_0}) \equiv -1 \ (\mathrm{mod}\ p),
\end{equation}
or
\begin{equation}\label{18.1}
(p-\alpha_{i_0})\beta_{i_0} \equiv -1 \ (\mathrm{mod}\ p).
\end{equation}

However both $p-\beta_{i_0}$ and $p-\alpha_{i_0}$ appear in \eqref{15}, and therefore in both \eqref{12} and \eqref{13}.

We arrived at the contradiction that shows that the system of identities cannot be consistent.

\smallskip

\textit{3. Case of $p=5\ (\mathrm{mod}\ 8)$.}
In this case, $\frac{p-1}{4}$ is odd. Then $\frac{p-1}{4}-1$ is even, and each of the systems of identities \eqref{12}, \eqref{13} contains the even number of identities. This case is more complex.
To resolve this case, we introduce a notion of \emph{perfect system} that is a basic notion in this study.

The system of identities
\begin{eqnarray}
\alpha_2\gamma_2&\equiv&1\ (\mathrm{mod}\ p),\nonumber\\
\alpha_3\gamma_3&\equiv&1\ (\mathrm{mod}\ p),\nonumber\\
\ldots         &\equiv&\ldots,\label{21}\\
\alpha_{\frac{p-3}{2}}\gamma_{\frac{p-3}{2}}&\equiv&1\ (\mathrm{mod}\ p)\nonumber
\end{eqnarray}
is called perfect system. Apparently this system exists and unique. For any $\alpha_i$,
\[
\gamma_i:=\frac{K_ip+1}{\alpha_i},
\]
where $K_i$ is the smallest positive integer given such that the left-hand side of the equality is integer, $1\leq K_i\leq\alpha_i-1$. A simple example of the perfect system is given in Table \ref{tab:tab1} for $p=13$.

\begin{table}[h!]
\begin{center}
\caption{Example of perfect system for $p=13$}
\label{tab:tab1}
\begin{tabular}{c|c|c}
\hline\hline
$i$ &$\alpha_i$ &$\gamma_i$ \\
\hline\hline
$2$ & $2$ & $7$ \\
$3$ & $10$ & $4$\\
$4$ & $6$ & $11$ \\
$5$ & $3$ & $9$ \\
\hline
\end{tabular}
\end{center}
\end{table}

The system of identities \eqref{21} contains $2\left(\frac{p-1}{4}-1\right)$ identities, that is, all the identities except the first one, where $\alpha_1=1$ (or $\alpha_1=p-1$)
and the tag identity. The numeration of the indices coincides with that given in the joint system of identities \eqref{12} and \eqref{13}. The system of identities \eqref{21} is uniquely defined (but not unique; see Remark \ref{rem2} below), all $\alpha_i$ and $\gamma_i$ are distinct. Indeed, with $\alpha_{i_0}\neq\alpha_{i_1}$, we obviously have $\gamma_{i_0}\neq\gamma_{i_1}$. If we assume to the contrary that $\gamma_{i_0}=\gamma_{i_1}$, then we easily arrive at the contradiction by subtracting the identities. As well, $\alpha_i$, $i\geq3$, is chosen such that $\alpha_i$ distinguishes from all of the previous values $\alpha_j$ and $\gamma_j$, $2\leq j\leq i-1$.

The system is called \textit{perfect}, because all the $\alpha_i$ and $\gamma_i$ in this system are distinct and the system itself is uniquely defined. Note that the similar perfect system can be built for the identities, in which the right-hand side is equal to $-1 \ (\mathrm{mod}\ p)$.

Now, in order to get \eqref{12} and \eqref{13} from \eqref{21}, we separate half of the identities from \eqref{21} and multiply both sides there by $-1$ given that each half (subsystem) is constructed according to the rule: if \eqref{14} belongs to the subsystem, then also \eqref{15} does. Similarly, if \eqref{16} belongs to the other subsystem, then also \eqref{17} does. With closed subsystems, \eqref{12} and \eqref{13} are fully separated and the system of identities is consistent.

More specific explanation is as follows.
We take the first identity in \eqref{21} and transform it as follows
\[
\alpha_2(p-\gamma_2)=\alpha_2\tilde{\gamma_2}\equiv-1\ (\mathrm{mod}\ p).
\]
Then,
\[
(p-\alpha_2)\gamma_2=\tilde{\alpha_2}\gamma_2\equiv-1\ (\mathrm{mod}\ p).
\]
Thus instead of the original two identities
\begin{eqnarray*}
  \alpha_2\gamma_2 &\equiv& 1\ (\mathrm{mod}\ p), \\
  \tilde{\alpha}_2\tilde{\gamma}_2 &\equiv& 1\ (\mathrm{mod}\ p),
\end{eqnarray*}
we get
\begin{eqnarray*}
  \alpha_2\tilde{\gamma_2} &\equiv& -1\ (\mathrm{mod}\ p), \\
  \tilde{\alpha_2}\gamma_2 &\equiv& -1\ (\mathrm{mod}\ p).
\end{eqnarray*}
That is, in both of the cases the same set of four parameters $\alpha_2$, $\gamma_2$, $\tilde{\alpha}_2$ and $\tilde{\gamma}_2$ is used. This procedure continues similarly with other quantities until getting a half part of all quantities transformed and collected in the subsystem.
\end{proof}

\begin{rem}\label{rem1}
According to the construction in the proof of Lemma \ref{prop1}, the system of identities \eqref{2} to be consistent, must be originated from the perfect system of identities, according to the rules established in the proof. That is, we originally must have \eqref{21} and then build \eqref{12} and \eqref{13} with the following extension to \eqref{2}. 
The possible number of consistent systems of identities originated from the perfect system is
\[
\binom{\frac{p-5}{4}}{\frac{p-5}{8}}.
\]
\end{rem}
\begin{rem}\label{rem2}
The system of identities \eqref{21} can be specified by $2^{\frac{p-5}{2}}$ possibilities by changing the order of the values in the pairs $\{\alpha_{i}, \gamma_{i}\}$. For example, following Table \ref{tab:tab1}, the pair $\{\alpha_2, \gamma_2\}=\{2, 7\}$ can be changed by  $\{\alpha_2, \gamma_2\}=\{7, 2\}$, and so on.
\end{rem}

\subsection{The final part of the proof of Theorem \ref{thm1}}\label{S2.4}

Our further goal is to adapt the statement of Lemma \ref{prop1} to the residues modulo $p$ of the values $\{2!, 3!,\ldots, (p-1)!\}$. The number of residues is $p-2$.  If the aforementioned residues of $\{2!, 3!,\ldots, (p-1)!\}$ all are distinct, then it is known \cite{RS, T} that the missing residue is $r=-\big(\frac{p-1}{2}\big)! \ (\mathrm{mod}\ p)$ that is not congruent to any of $\{2!, 3!,\ldots, (p-1)!\}$. So, we are to complement our set with this additional value. Then the last identity in \eqref{1},
\[
\left(\frac{p-1}{2}\right)!\left(\frac{p-1}{2}\right)!\equiv{-1}(\mathrm{mod}\ p)
\]
is to be replaced by
\[
r\left(\frac{p-1}{2}\right)!\equiv{1}\ (\mathrm{mod}\ p),
\]
and instead of \eqref{1} we have the following system of identities (rewritten here in a more convenient form):
\begin{eqnarray}
2!(p-3)! &\equiv&-1\ (\mathrm{mod}\ p),\nonumber\\
3!(p-4)! &\equiv&+1\ (\mathrm{mod}\ p),\nonumber\\
\cdots&\equiv&\cdots\label{24}\\
(i-1)!(p-i)!&\equiv&(-1)^i\ (\mathrm{mod}\ p),\nonumber\\
\cdots&\equiv&\cdots\nonumber\\
\left(\frac{p-3}{2}\right)!\left(\frac{p+1}{2}\right)!&\equiv&+1\ (\mathrm{mod}\ p),\nonumber
\end{eqnarray}
with the two additional identities:
\begin{eqnarray}
(p-1)!(p-2)!&\equiv&-1 \ (\mathrm{mod}\ p),\label{22}\\
r\left(\frac{p-1}{2}\right)!&\equiv&{+1}\ (\mathrm{mod}\ p)\label{23}.
\end{eqnarray}
That is, we have $\frac{p-1}{2}$ identities in total.

Now we are to consider the two cases: $p=1\ (\mathrm{mod}\ 8)$ and $p=5\ (\mathrm{mod}\ 8)$.

In the case of $p=1\ (\mathrm{mod}\ 8)$, the statement of Theorem \ref{thm1} follows directly from Lemma \ref{prop1}. In this case, the residues modulo $p$ of $\{2!$, $3!$,\ldots, $(p-1)!\}$ cannot be distinct, since then, according to Lemma \ref{prop1},  this contradict to identities \eqref{24}--\eqref{23}.

For the case $p=5\ (\mathrm{mod}\ 8)$ we apply our findings given under the proof of Lemma \ref{prop1}.
With $(p-2)!=1\ (\mathrm{mod}\ p)$ and $(p-1)!=-1\ (\mathrm{mod}\ p)$, identity \eqref{22} is the analogue of the first identity in \eqref{4}. The role of identity \eqref{23} is the tag identity.
Therefore we now study \eqref{24}.

We consider the system
\begin{equation}\label{26}
(i-1)!(p-i)!\equiv(-1)^i \ (\mathrm{mod}\ p), \quad i=3, 4,\ldots, \frac{p-1}{2}.
\end{equation}
Multiplying both sides of \eqref{26} by $(-1)^i$, we obtain
\begin{equation}\label{6}
(-1)^i(i-1)!(p-i)!\equiv 1 \ (\mathrm{mod}\ p), \quad i=3, 4,\ldots, \frac{p-1}{2}.
\end{equation}

As well, keeping in mind that $(p-2)!\equiv1\ (\mathrm{mod}\ p)$, we rewrite \eqref{6} as
\begin{equation}\label{8}
(-1)^i(i-1)!(p-i)![(p-2)!]^{i-1}\equiv 1 \ (\mathrm{mod}\ p), \quad i=3, 4,\ldots, \frac{p-1}{2}.
\end{equation}

We now prove that the system of identities \eqref{8} (or equivalently \eqref{6}) is not perfect.

Indeed, setting
\begin{equation*}
\alpha_i^\prime:=\prod_{j=2}^{i}\alpha_j=i!, \quad \gamma_i^\prime:=\prod_{j=2}^{i}\gamma_j=(-1)^{i+1}(p-i-1)![(p-2)!]^{i},
\end{equation*}
we obtain:
\begin{equation}\label{35}
\alpha_i=i, \quad \gamma_i=-\frac{(p-2)!}{p-i}=\frac{(p-2)!}{i} (\mathrm{mod}\ p).
\end{equation}
We will prove below that if \eqref{8} is the perfect system, that is,
\begin{equation}\label{25}
\alpha_i^\prime\gamma_i^\prime\equiv1\ (\mathrm{mod} \ p)
\end{equation}
is the perfect system, then 
\begin{equation}\label{29}
\alpha_i\gamma_i\equiv1\ (\mathrm{mod} \ p), \quad i=2, 3,\ldots, \frac{p-3}{2},
\end{equation}
must be the perfect system too. 
Specifically, we have the following lemma.

\begin{lem}\label{prop2} If the system of identities \eqref{25} is perfect, then the following two conditions must be satisfied:

\begin{enumerate}
\item [(i)] $\gamma_i>\frac{p-3}{2}$ for any $i\in\big\{2, 3,\ldots,\frac{p-3}{2}\big\}$.

\item[(ii)] There are no index values $i$ taking the values from $\big\{2, 3,\ldots,\frac{p-3}{2}\big\}$, such that $\gamma_i=p-\alpha_i$.
\end{enumerate}
\end{lem}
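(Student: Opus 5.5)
The plan is to reduce both conditions to properties of the ``elementary'' system \eqref{29}, where by \eqref{35} we have $\alpha_i=i$ and, since $(p-2)!\equiv 1\ (\mathrm{mod}\ p)$, $\gamma_i\equiv i^{-1}\ (\mathrm{mod}\ p)$, taken as the representative in $\{1,\ldots,p-1\}$. First I would pass from \eqref{25} to \eqref{29}. Because $\alpha_i=\alpha_i^\prime/\alpha_{i-1}^\prime$ and $\gamma_i=\gamma_i^\prime/\gamma_{i-1}^\prime$ modulo $p$, dividing consecutive identities of \eqref{25} gives $\alpha_i\gamma_i\equiv 1$ for every $i=2,\ldots,\frac{p-3}{2}$. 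I would then read ``perfect'' in the sense of the definition around \eqref{21}: all $\alpha_i,\gamma_i$ are pairwise distinct, and the system excludes both the identity with $\alpha_1=1$ (and its partner $p-1$) and the tag identity \eqref{31}.

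For (i), I would argue by distinctness. The values $\alpha_j=j$ for $j=2,\ldots,\frac{p-3}{2}$ already occupy the whole set $\{2,\ldots,\frac{p-3}{2}\}$, so perfectness forces $\gamma_i\ne j$ for all such $j$. The remaining small values are also excluded: $\gamma_i\ne 1$, since $\gamma_i\equiv 1$ would force $\alpha_i\equiv 1$, i.e.\ $i=1$, which is outside the index range. Hence $\gamma_i\notin\{1,\ldots,\frac{p-3}{2}\}$, that is, $\gamma_i>\frac{p-3}{2}$.

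For (ii), I would use the analysis of \eqref{20} in the proof of Lemma \ref{prop1}. If $\gamma_i=p-\alpha_i=p-i$, then $i(p-i)\equiv 1$, i.e.\ $i^2\equiv -1\ (\mathrm{mod}\ p)$. This is precisely the tag identity, which was shown to be unique up to the symmetry $i\leftrightarrow p-i$ and which by construction is not a member of the perfect system \eqref{21}. So no index $i$ in the range can satisfy $\gamma_i=p-\alpha_i$.

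The step I expect to be the main obstacle is the first one: the transfer of \emph{perfectness}, as opposed to mere validity of the identities, from \eqref{25} to \eqref{29}. Dividing identities preserves the congruences, but it does not obviously preserve pairwise distinctness of the factors, nor exclusion of the tag value. I would first try to show that a coincidence $\gamma_i=\alpha_j$ or $\gamma_i=\gamma_j$ in \eqref{29} propagates through the telescoping products to a coincidence among the $\alpha^\prime,\gamma^\prime$. If that fails, I would simply \emph{impose} perfectness of \eqref{29} as the working hypothesis, as the text preceding the lemma indicates, and then derive (i) and (ii) from it as above. Once perfectness of \eqref{29} is granted, both (i) and (ii) are short.
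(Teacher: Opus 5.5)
There is a genuine gap, and it is the step you yourself flag as the main obstacle. That step is not a technicality: it carries the entire content of the lemma, and your proposal does not supply it.

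In your own reading of ``perfect'', the hypothesis is that the values $\alpha'_k=k!$ and $\gamma'_k\equiv(k!)^{-1}$, $2\le k\le\frac{p-3}{2}$, are pairwise distinct modulo $p$ and avoid $\pm1$ and the tag pair. Your arguments for (i) and (ii) instead use that \eqref{29} is perfect. Dividing consecutive identities keeps the congruences but transfers no distinctness. Two of the values $\alpha'_k,\gamma'_k$ coincide only if $(k+1)(k+2)\cdots l\equiv1$ for some $k<l$, or if $k!\,l!\equiv1\pmod{p}$. A relation $i_0j_0\equiv1$ with non-adjacent $i_0,j_0\in\{2,\ldots,\frac{p-3}{2}\}$ is of neither form, and nothing lets you convert it into one. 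This propagation is exactly what the paper invokes for (i), where it is only asserted. So your suspicion is justified, but your (i) then rests on an unproved claim. The fallback of imposing perfectness of \eqref{29} turns the lemma into a near-tautology about a different system. It is also circular in context: the lemma is the paper's device for passing from perfectness of \eqref{25} to the properties of \eqref{29} announced just before it.

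Part (ii) shows that no shortcut of this kind can work. Let $s\in\{1,\ldots,\frac{p-1}{2}\}$ with $s^2\equiv-1\pmod{p}$. Then $s\ne1$, and $s\ne\frac{p-1}{2}$ because $(\frac{p-1}{2})^2\equiv\frac14\not\equiv-1$ for $p\ne5$. Hence $s\in\{2,\ldots,\frac{p-3}{2}\}$ and $\gamma_s\equiv s^{-1}\equiv-s$, i.e.\ $\gamma_s=p-\alpha_s$ (for $p=13$: $s=5$, $\gamma_5=8$). So the conclusion of (ii) fails for every prime $p\equiv5\pmod{8}$ with $p>5$. The lemma can therefore hold only vacuously: it is equivalent to the statement that \eqref{25} is never perfect.

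Your remark that the tag identity is excluded ``by construction'' is true of the freely built system \eqref{21}, where the $\alpha_i$ are chosen. It is not true of \eqref{29}, where $\alpha_i=i$ is prescribed and the pair $\{s,p-s\}$ always occurs. Since your fallback hypothesis is false for every such $p$, deriving (ii) from it says nothing about \eqref{25}.

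A proof has to extract a contradiction from the structure of \eqref{25} itself. The paper tries to do this for (ii) by a different route:
\begin{enumerate}
\item it uses closure of a perfect system under $(\alpha',\gamma')\mapsto(-\alpha',-\gamma')$ to pair \eqref{33} with \eqref{34};
\item it uses the count $\frac{p-1}{2}-i_0$ of products containing the factor $\min\{r,p-r\}$ to force $i_0$ even;
\item it reduces the remaining case to $p=29$.
\end{enumerate}
Your proposal has no ingredient of this kind.
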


\begin{proof} (i) Assume to the contrary that $\gamma_{i_0}=j_0$ for some $i_0$, where $j_0\in\big\{2, 3,\ldots,\frac{p-3}{2}\big\}$. Then
\begin{eqnarray*}
\alpha_{i_0}\alpha_{j_0}=\gamma_{i_0}\gamma_{j_0}=i_0j_0\equiv1 (\mathrm{mod}\ p).
\end{eqnarray*}
However if these identities are fulfilled, then the set of all $\alpha_i^\prime, \gamma_i^\prime$ must contain repeated values, and hence \eqref{25} must contain repeated identities. Then \eqref{25} is not perfect.

\smallskip
(ii) Assume to the contrary that $i_0>2$ is the index value such that $\alpha_{i_0}=\min\{r, p-r\}$. Then $\gamma_{i_0}=\max\{r, p-r\}$, and for all $i\geq i_0$ we have:
\begin{equation}\label{33}
\alpha_{i}^\prime=\min\{r, p-r\}\prod_{\substack{j=2\\ j\neq i_0}}^{i}\alpha_j, \quad \gamma_{i}^\prime=\max\{r, p-r\}\prod_{\substack{j=2\\ j\neq i_0}}^{i}\gamma_j.
\end{equation}
Now we take into account the fact that was earlier used in the proof of Lemma \ref{prop1}: if $\alpha_{i}^\prime\gamma_{i}^\prime\equiv1\ (\mathrm{mod}\ p)$ belongs to the perfect system of identities, then
$(-\alpha_{i}^\prime)(-\gamma_{i}^\prime)\equiv1\ (\mathrm{mod}\ p)$ also does. Therefore together with \eqref{33}, we must also have:

\begin{equation}\label{34}
\alpha_{k(i)}^\prime=\min\{r, p-r\}\prod_{\substack{j=2\\ j\neq i_0}}^{i}\gamma_j, \quad \gamma_{k(i)}^\prime=\max\{r, p-r\}\prod_{\substack{j=2\\ j\neq i_0}}^{i}\alpha_j.
\end{equation}
for some indices $k(i)$. Note that the total number of $\alpha_{i}^\prime$ containing the multiplier $\min\{r, p-r\}$ is even and equal to
\[
\frac{p-1}{2}-i_0.
\]
Hence half of them are associated with $\alpha_{i}^\prime$, and the rest with $\alpha_{k(i)}$. This means that if the required index value $i_0$ exists, it must only take an even value.

Then for half of the indices defined as $i_0\leq i\leq \frac{p-5+2i_0}{4}$ from \eqref{33} we have $\alpha_{i}^\prime/\alpha_{i-1}^\prime=\alpha_i$. %

Therefore, this subset of indices belongs to the set
\[
\mathcal{I}[i]=\left\{i_0, i_0+1,\ldots, \frac{p-5+2i_0}{4}\right\}.
\]
Then the remaining set of indices $k(i)$ associated with $\alpha_{k(i)}^\prime$ defined in \eqref{34} must belong to the set
\[
\mathcal{I}[k(i)]=\left\{\frac{p-1+2i_0}{4}, \frac{p+3+2i_0}{4},\dots, \frac{p-3}{2}\right\}.
\]

Notice that following \eqref{33} and \eqref{34}, for any $i-1$ and $i$ both belonging to $\mathcal{I}[i]$, we have
\[
\alpha_i=\frac{\alpha_{i}^\prime}{\alpha_{i-1}^\prime}=\frac{\gamma_{k(i)}^\prime}{\gamma_{k(i-1)}^\prime}.
\]
Hence according to part (i), the system of identities \eqref{25} is not perfect.

The only remaining case is when the set $\mathcal{I}[i]$ contains the single element $i_0=\frac{p-5}{2}$. In this case, $\alpha_{i_0}^\prime=\big(\frac{p-5}{2}\big)!$, and hence 
$
\frac{p-5}{2}
$
must be equal to $\min\{r, p-r\}$. That is,
\[
\left(\frac{p-5}{2}\right)\left(\frac{p+5}{2}\right)\equiv1\ (\mathrm{mod}\ p).
\]
We show below that such the value $p$ among all of the primes $p=5\ (\mathrm{mod}\ 8)$ is unique and defined for $p=29$ only.

Indeed, for $n\in\mathbb{N}$ consider $f(n)=8n+5$. Then
\begin{equation*}
\frac{f(n)-5}{2}\cdot \frac{f(n)+5}{2}=16n^2+20n\equiv(2n-5)\ (\mathrm{mod}\ 8n+5),
\end{equation*}
and  $2n-5=1$ if and only if $n=3$, that is, the equality is satisfied just for $p=29$. However it is known from the numerical study that \eqref{25} is not perfect for $p=29$. The lemma is proved.
\end{proof}

Due to Lemma \ref{prop2}, it is only remained to check whether
\begin{equation}\label{30}
i\cdot\frac{(p-2)!}{i}\equiv1 \ (\mathrm{mod}\ p), \quad i=2, 3,\ldots, \frac{p-3}{2}
\end{equation}
is the perfect system for at least a certain value of $p$.

Let $\delta_i$ denote the smallest positive integer that is congruent to $\frac{(p-2)!}{i}$ modulo $p$. Then
\begin{equation}\label{7}
i\delta_i=K_ip+1\equiv1\ (\mathrm{mod} \ p), \quad i=2, 3,\ldots, \frac{p-3}{2},
\end{equation}
for some integers $K_i$.

Following all this, 
the system of identities \eqref{7} can be perfect if all $\delta_i$ are distinct, take the values from
\begin{equation}\label{10}
\left\{\frac{p-1}{2}, \frac{p+1}{2}, \frac{p+3}{2},\ldots, p-2\right\},
\end{equation}
and do not congruent to $\big(\frac{p-1}{2}\big)!\ (\mathrm{mod}\ p)$ and $r\ (\mathrm{mod}\ p)$ that appear in tag identity \eqref{23}.

If \eqref{7} is not the perfect system of identities, then together with \eqref{7} the systems of identities \eqref{6} and \eqref{8} must be not perfect too. Our aim now is to demonstrate that the system of identities \eqref{7} is not perfect.

Note first that neither of $\delta_i$ can be equal to $\frac{p-1}{2}$ or $\frac{p+1}{2}$, since otherwise it must be in contradiction with tag identity \eqref{23}. Indeed, if one of the $\delta_i$ is equal to $\frac{p-1}{2}$ or $\frac{p+1}{2}$, then we must assume that
\begin{equation}\label{9}
\left(\frac{p-1}{2}\right)!\neq \pm\frac{p-1}{2}\ (\mathrm{mod}\ p).
\end{equation}
The last is impossible, since then either $\big(\frac{p-1}{2}\big)!\ (\mathrm{mod}\ p)$ or $r\ (\mathrm{mod}\ p)$ must be congruent to one of the numbers $i=2, 3,\ldots,\frac{p-3}{2}$, that is, coincide with one of $\alpha_i=i$.

So, the remaining case is
\begin{equation}\label{11}
\left(\frac{p-1}{2}\right)!\equiv \pm\frac{p-1}{2}\ (\mathrm{mod}\ p).
\end{equation}
This case is impossible too.

Indeed, if $\big(\frac{p-1}{2}\big)!\equiv\frac{p-1}{2} \ (\mathrm{mod}\ p)$, then we need to have $\big(\frac{p-3}{2}\big)!\equiv1\ (\mathrm{mod}\ p)$. But this contradicts to the fact that all residues of $\{2!, 3!,\ldots, (p-1)!\}$ are distinct, since $(p-2)!\equiv1\ (\mathrm{mod}\ p)$ as well. Similarly, the identity $\big(\frac{p-1}{2}\big)!\equiv-\frac{p-1}{2} \ (\mathrm{mod}\ p)$ cannot be satisfied either, since then we need to have $\big(\frac{p-3}{2}\big)!\equiv-1\ (\mathrm{mod}\ p)$, and we arrived at the contradiction again, since $(p-1)!\equiv-1\ (\mathrm{mod}\ p)$.

Thus, neither  \eqref{9} nor \eqref{11} leads to the perfect system. The system of identities \eqref{7} is not perfect, and together with it the systems of identities \eqref{6} and \eqref{8} are not perfect either. The theorem is proved.

\subsection*{Declaration of interest statements}

\subsubsection*{Disclosure of interest} No conflict of interests was reported by the author.

\subsubsection*{Declaration of funding} No funding for this research was received.

\subsubsection*{Data availability statement} Data sharing is not applicable to this article as no new data were created or analyzed in this study.

\end{document}